\documentclass[12pt]{article}

\setlength{\textheight}{23cm}\setlength{\textwidth}{16cm}
\setlength{\topmargin}{-1cm} \setlength{\oddsidemargin}{-0.2cm}
\setlength{\evensidemargin}{0.0cm}

\usepackage{epsfig,amsmath,amsfonts,amssymb,latexsym,color,amsthm,hhline,multirow,subfigure,graphicx}
\usepackage{amsmath}
\usepackage{amsthm}
\usepackage{enumerate}
\usepackage{amsfonts}
\usepackage{verbatim}
\usepackage{graphicx}
\usepackage{float}

\newtheorem{theorem}{Theorem}[section]
\newtheorem{prop}{Proposition}[section]
\newtheorem{lemma}{Lemma}[section]

\newcommand{\E}{{\mathbb E}}

\newcommand {\PP}{{\mathbb P}}
\newcommand{\sss}{\scriptscriptstyle}
\newcommand{\sF}{\mathcal{F}(\mu,\mathbf{p}_{\sss L})}
\newcommand{\sFp}{\mathcal{F}(\mathbf{p}_{\sss L})}
\newcommand{\pL}{\mathbf{p}_{\sss L}}

\begin{document}

\title{The tail does not determine the size of the giant}\parskip=5pt plus1pt minus1pt \parindent=0pt
\author{Maria Deijfen\thanks{Department of Mathematics, Stockholm University; {\tt mia@math.su.se}} \and Sebastian Rosengren \thanks{Department of Mathematics, Stockholm University; {\tt rosengren@math.su.se}} \and Pieter Trapman \thanks{Department of Mathematics, Stockholm University; {\tt ptrapman@math.su.se}} }
\date{June 2018}
\maketitle

\begin{abstract}
\noindent The size of the giant component in the configuration model, measured by the asymptotic fraction of vertices in the component, is given by a well-known expression involving the generating function of the degree distribution. In this note, we argue that the distribution over small degrees is more important for the size of the giant component than the precise distribution over very large degrees. In particular, the tail behavior of the degree distribution does not play the same crucial role for the size of the giant as it does for many other properties of the graph. Upper and lower bounds for the component size are derived for an arbitrary given distribution over small degrees $d\leq L$ and given expected degree, and numerical implementations show that these bounds are close already for small values of $L$. On the other hand, examples illustrate that, for a fixed degree tail, the component size can vary substantially depending on the distribution over small degrees. 

\vspace{0.3cm}

\noindent \emph{Keywords:} Configuration model, component size, degree distribution.

\vspace{0.2cm}

\noindent AMS 2010 Subject Classification: 05C80.
\end{abstract}

\section{Introduction and results}

The configuration model is one of the simplest and most well-known models for generating a random graph with a prescribed degree distribution. It takes a probability distribution with support on the non-negative integers as input and gives a graph with this degree distribution as output. The model is very well studied and there are precise answers to many questions concerning properties of the model such as the threshold for the occurrence of a giant component \cite{JL,MR-95}, the asymptotic fraction of vertices in the largest component \cite{JL,MR-98}, diameter and distances in the supercritical regime \cite{dist_fv,dist_iv,diam}, criteria for the graph to be simple \cite{Svante} etc; see \cite[Chapter 7]{RemcoI} and \cite[Chapters 4-5]{RemcoII} for detailed overviews. Empirical networks often exhibit power law distributions, that is, the number of vertices with degree $d$ decays as an inverse power of $d$ for large degrees. For this reason, there has been a lot of attention on properties of the configuration model with this type of degree distribution. Here we focus on the size of the largest component  in the supercritical regime -- specifically, the asymptotic fraction of vertices in the giant component -- as a functional of the degree distribution. Our main message is that the distribution over small degrees is more important for the size of the largest component than the tail behavior of the degree distribution. While this is not surprising, in view of the general focus on degree tails in the literature, we think it deserves to be pointed out and elaborated on.\medskip

\textbf{The model and its phase transition}

To define the model, fix the number $n$ of vertices in the graph and let $F=\{p_d\}_{d\geq 0}$ be a probability distribution with support on the non-negative integers. Assign a random number $D_i$ of half-edges independently to each vertex $i=1,\ldots, n$, with $D_i\sim F$. If the total number of half-edges is odd, one extra half-edge is added to a uniformly chosen vertex. Then pair half-edges uniformly at random to create edges, that is, first pick two half-edges uniformly at random and join them into an edge, then pick two half-edges from the set of remaining half-edges and create another edge, and so on until all half-edges have been paired. The construction allows for self-loops and multiple edges between the same pair of vertices. However, if the degree distribution has finite mean, such edges can be removed without changing the asymptotic degree distribution, and if the second moment is finite, there is a strictly positive probability that the graph is simple; see e.g.\ \cite{Tom_Anders,Svante}. 

Write $\mu=\E[D]$ and $\nu=\E[D(D-1)]/\mu$, and assume throghout that $p_2\neq 1$. It is well-known that the threshold for the occurrence of a giant component in the configuration model is given by $\nu=1$: if $\nu>1$, then there is with high probability a unique giant component occupying a positive fraction $\xi$ of the vertices as $n\to\infty$, while if $\nu<1$, then the largest component grows sublinearly in $n$; see \cite{MR-95,JL}. To see this, consider an exploration of the graph starting from a uniformly chosen vertex and then proceeding via nearest neighbors. For large $n$, such an exploration can be approximated by a branching process, where the offspring (=degree) of the first vertex has distribution $F$. For vertices in later generations, their degrees are distributed according to a size biased version of $F$. Indeed, by construction of the graph, the vertices constitute the end-points of uniformly chosen half-edges, and the probability of encountering a vertex with degree $d$ is therefore proportional to $d$. Since we arrive at a vertex from one neighbor, the remaining number of neighbors -- corresponding to the offspring of the vertex -- has a down-shifted size biased distribution $\tilde{F}=\{\tilde{p}_d\}_{d\geq 0}$, defined by
\begin{equation}\label{eq:sb_down}
\tilde{p}_d=\frac{(d+1)p_{d+1}}{\mu}.
\end{equation}
Infinite survival in the approximating branching process corresponds to a giant component in the graph, and the critical parameter $\nu$ is easily identified as the mean of the distribution \eqref{eq:sb_down}. Let $\xi$ denote the asymptotic fraction of vertices in the largest component, throughout refered to as the size of the largest component. The asymptotic size $\xi$ is given by the survival probability in the two-stage branching process (this can fail when $p_2\neq 1$, see Remark 2.7 in \cite{JL}). Write $g(s)$ for the probability generating function for the degree distribution $F$ and note that the probability generating function for $\tilde{F}$ is given by $g'(s)/\mu$. Let $\tilde{z}$ denote the probability that a branching process with offspring distribution $\tilde{F}$ goes extinct. Then $\tilde{z}$ is the smallest non-negative solution to the equation $s=g'(s)/\mu$, and
\begin{equation}\label{eq:xi}
\xi=1-g(\tilde{z}).
\end{equation}
A comprehensive description of the above exploration process can be found e.g.\ in \cite[Chapter 4]{RemcoII}. As for notation, when we want to emphasize the role of a given distribution $F$ for the above quantities, we write $\xi_{\sss F}$ and $\tilde{z}_{\sss F}$ etc. Furthermore, we always equip quantities related to down-shifted size biased distributions with a wiggle-hat.\medskip

\textbf{Basic examples}

We will be interested in how the size $\xi$ of the giant component depends on properties of the degree distribution $F$. Despite the large interest in the configuration model in the context of network modeling, there has been surprisingly little work on this issue. One recent example however is \cite{Lasse}, where component sizes are compared when degree distributions are ordered according to various concepts of stochastic domination. We also mention \cite{Tom_Pieter}, where a distribution is identified that maximizes the size of the largest component in a percolated configuration graph for a given mean degree: this is achieved by putting all mass at 0 and two consecutive integers. Here, we will throughout restrict to the class of distributions with $p_0=0$, that is, to graphs without isolated vertices. We hence require that all vertices have a chance of being included in a giant component (if such a component exists), and do not investigate cases where the component size can be tuned by removing some fraction of the vertices.

First note that, when the mean $\mu$ is fixed, the critical parameter $\nu$ increases as the variance of the distribution increases, making it easier to form a giant component. This might lead one to suspect that the size of the giant component is also increasing in $\nu$. This however is not true, in fact it is typically the other way around, as elaborated on in \cite{Lasse}. To understand this, note that fixing the mean and increasing the variance implies that there will be more vertices with small degree in the graph. Vertices with small degree are those that may not be included in the giant component, which then becomes smaller. Consider a very simple example with $D\in\{1,2,3\}$ where the probability $p_1$ of degree 1 is varied and the probabilities $p_2$ and $p_3$ are tuned so that the mean is kept fixed. As $p_1$ increases, also the probability $p_3$ increases, implying a larger variance. Figure \ref{fig:no_tail}(a) shows a plot of the component size and the critical parameter against $p_1$ when $\mu=2.1$, and we see that the giant component shrinks from occupying all vertices to a fraction 0.85 of them, while the critical parameter increases linearly. Figure \ref{fig:no_tail}(b) shows a similar plot (with only the component size) when $D\in\{1,2,10\}$ and again $\mu=2.1$, and we see that the component size decreases from 1 to less than 0.65. Note that these examples also illustrate that the mean in itself does not determine the component size, since the mean is constant in both pictures.

\begin{figure}
\centering \mbox{\subfigure[$D\in\{1,2,3\}$]{\includegraphics[height=4.7cm]{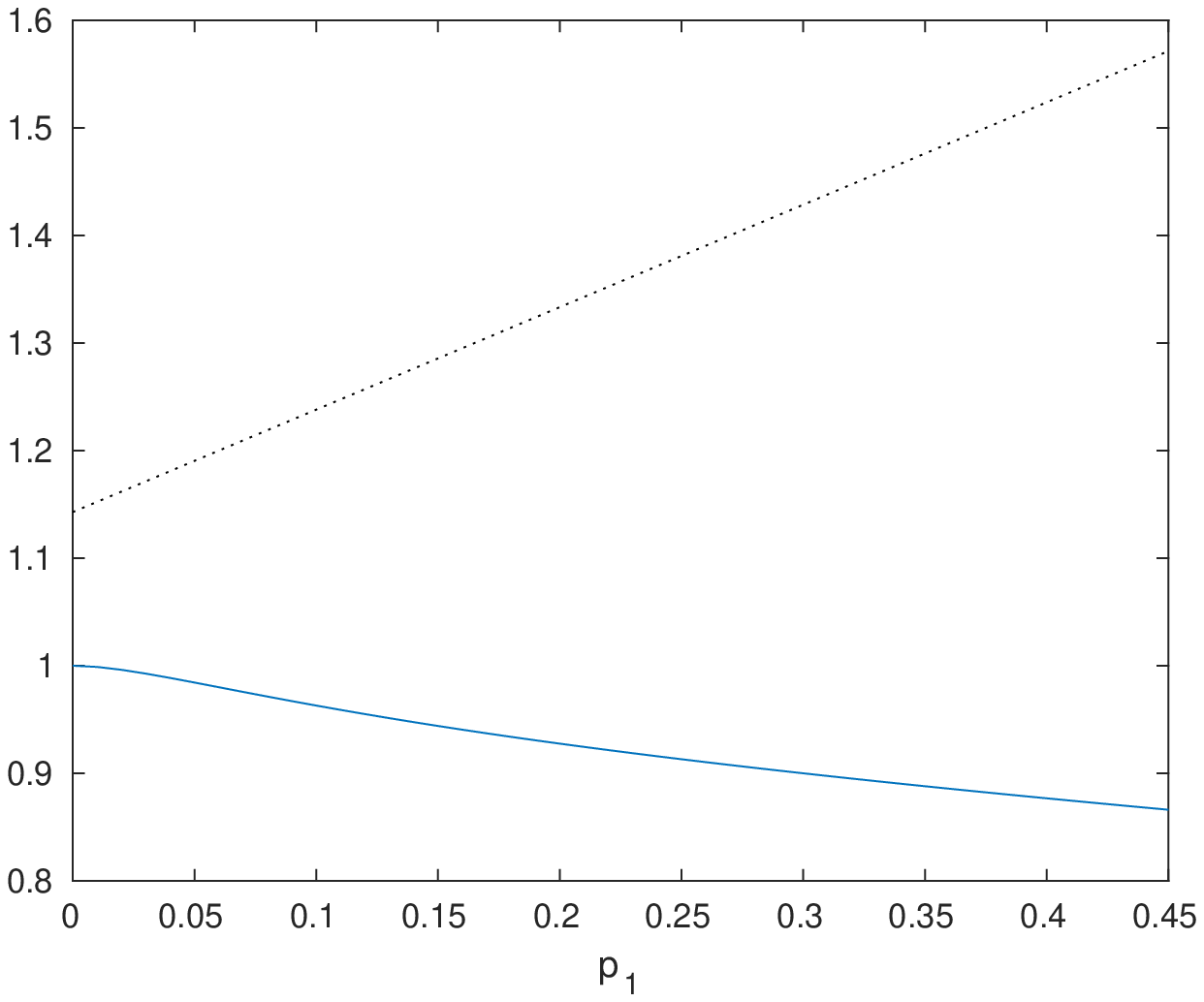}}}
\centering \mbox{\subfigure[$D\in\{1,2,10\}$]{\includegraphics[height=4.7cm]{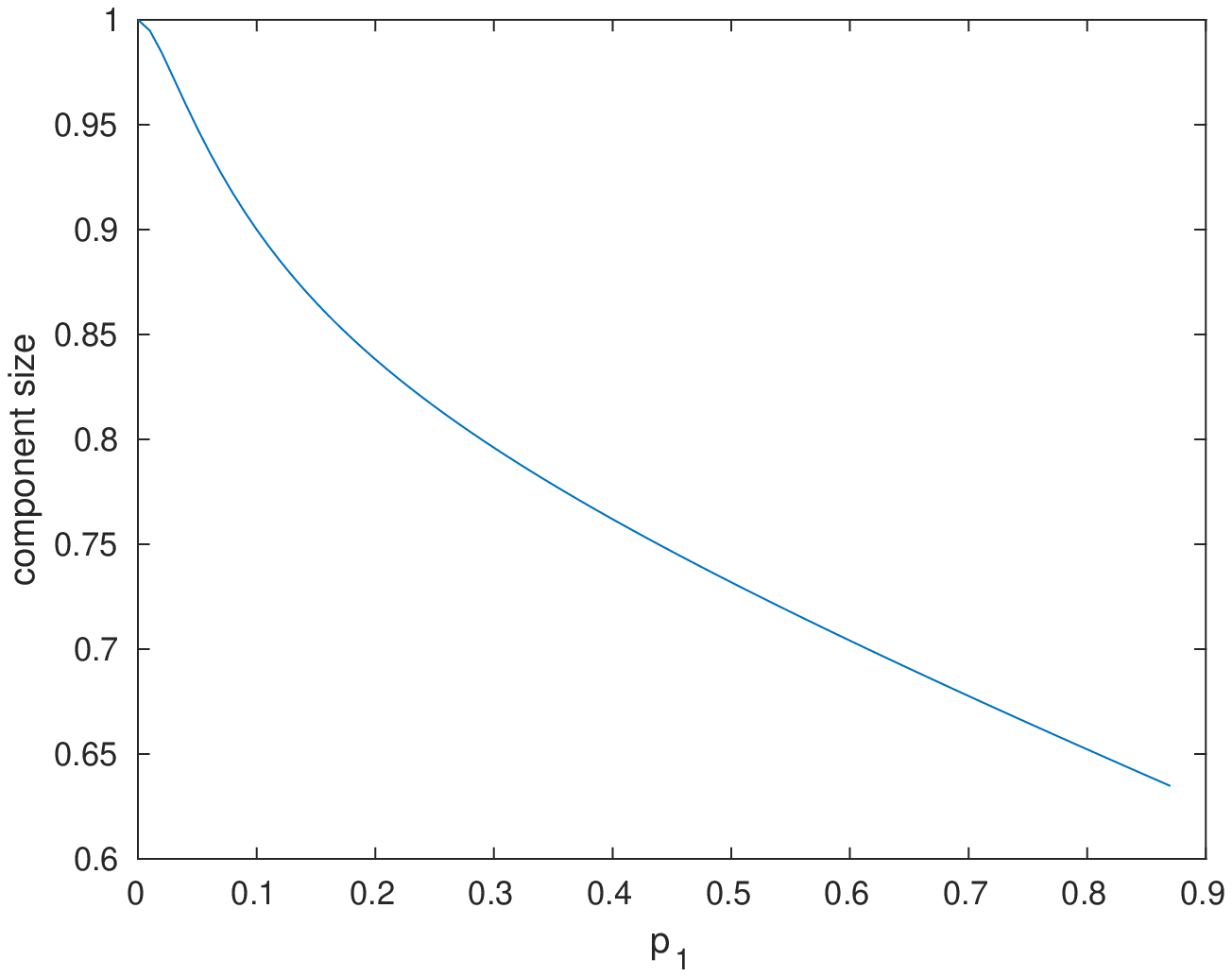}}}
\caption{Asymptotic size $\xi$ of the giant plotted aginst $p_1$ with mean fixed at $\mu=2.1$ for (a): $D\in\{1,2,3\}$ and (b): $D\in\{1,2,10\}$. In (a) also a plot of the critical parameter $\nu$ is included (while in (b) the critical parameter grows too large to fit in the plot). The probability $p_1$ does not run all the way to 1 since the mean cannot be preserved for large values of $p_1$.}\label{fig:no_tail}
\end{figure}


In the example we see that the component size $\xi$ decreases as the fraction of degree 1 vertices increases. This is natural since degree 1 vertices serve as dead ends in the component. If $\PP(D\geq 2)=1$ (and $p_2\neq 1$), then the extinction probability $\tilde{z}$ equals 0, implying that $\xi=1$. The size of the giant is hence determined by the balance between degree 1 vertices and vertices of larger degree. Increasing the variance in a distribution with a fixed mean typically implies an increase in the number of low degree vertices, and our main message is that the distribution over small degrees is in fact more important for the size of the giant component than the precise distribution over very large degrees. In particular, the tail behavior of the degree distribution does not play the same crucial role for the size of the giant as it does for certain other quantities such as e.g.\ the scaling of the distances in the giant component \cite{dist_fv,dist_iv}.

That the distribution over small degrees can play a significant role is illustrated in Figure \ref{fig:tail}, where the degrees have a fixed tail distribution and the remaining probability is allocated at small degrees in different mean-preserving ways. In Figure \ref{fig:tail}(a), the degree distribution is fixed for $d\geq 4$ (we consider a Poisson(2) distribution and a power-law with exponent -3) and the remaining probability is allocated at the degrees 1, 2 and 3. Specifically, the probability $p_1$ is varied and $p_2$ and $p_3$ are then adjusted so that the mean is kept fixed at $\mu=2.2$. Figure \ref{fig:tail}(a) shows plots of the component size against $p_1$ and we see that, although the tails remain the same, the component size changes with $p_1$ in both cases. Figure \ref{fig:tail}(b) shows a similar plot when the tail is fixed for $d\geq 11$ (Poisson and power-law) and the mean is equal to 3.5.

\begin{figure}
\centering \mbox{\subfigure[Blue: $p_d=\PP(\mbox{Po}(2)=d)$\newline \hspace*{0.6cm}Red: $p_d=2d^{-3}$]{\includegraphics[height=4.7cm]{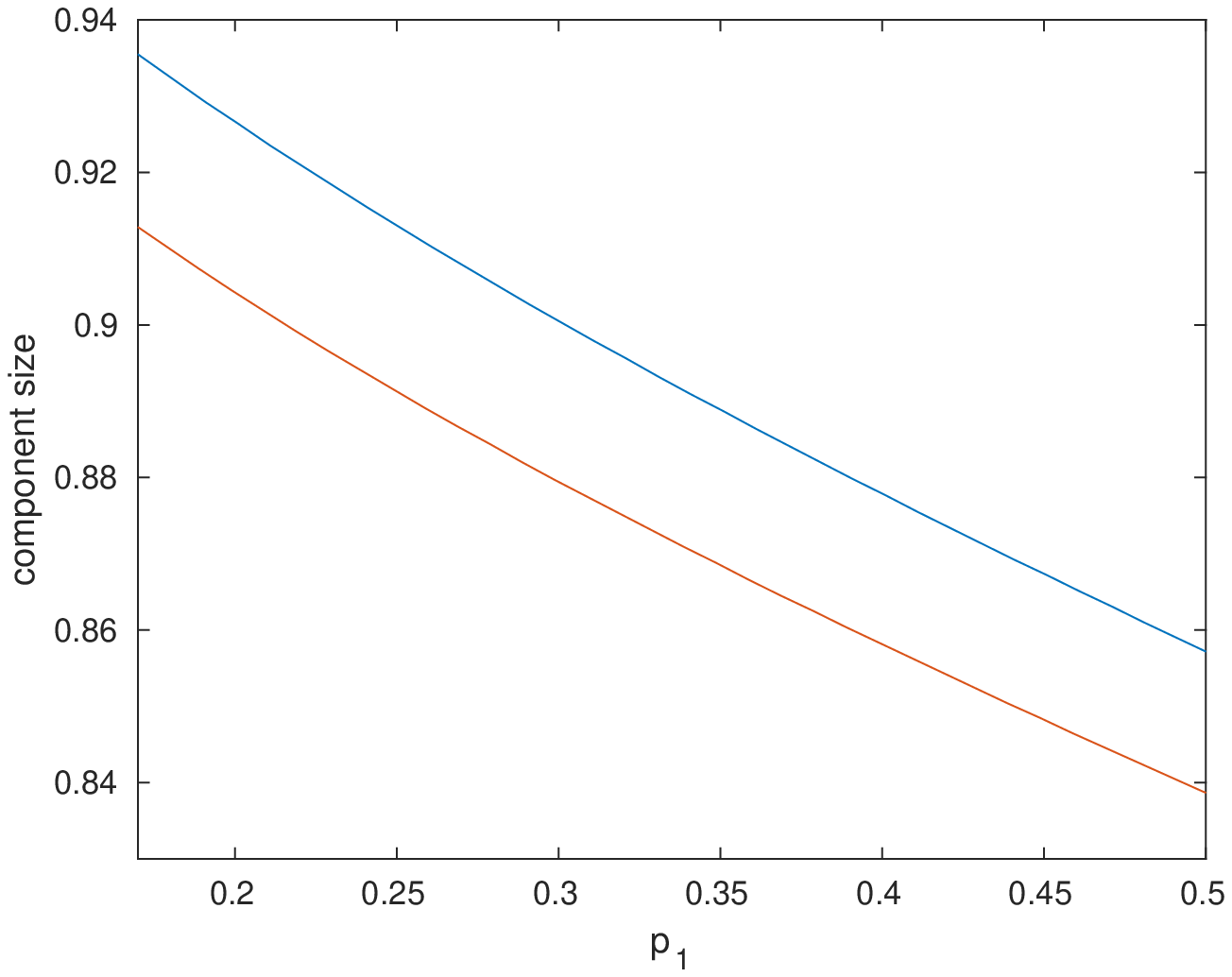}}}
\centering \mbox{\subfigure[Blue: $p_d=\PP(\mbox{Po}(7)=d)$\newline \hspace*{0.6cm}Red: $p_d=5d^{-2.5}$]{\includegraphics[height=4.7cm]{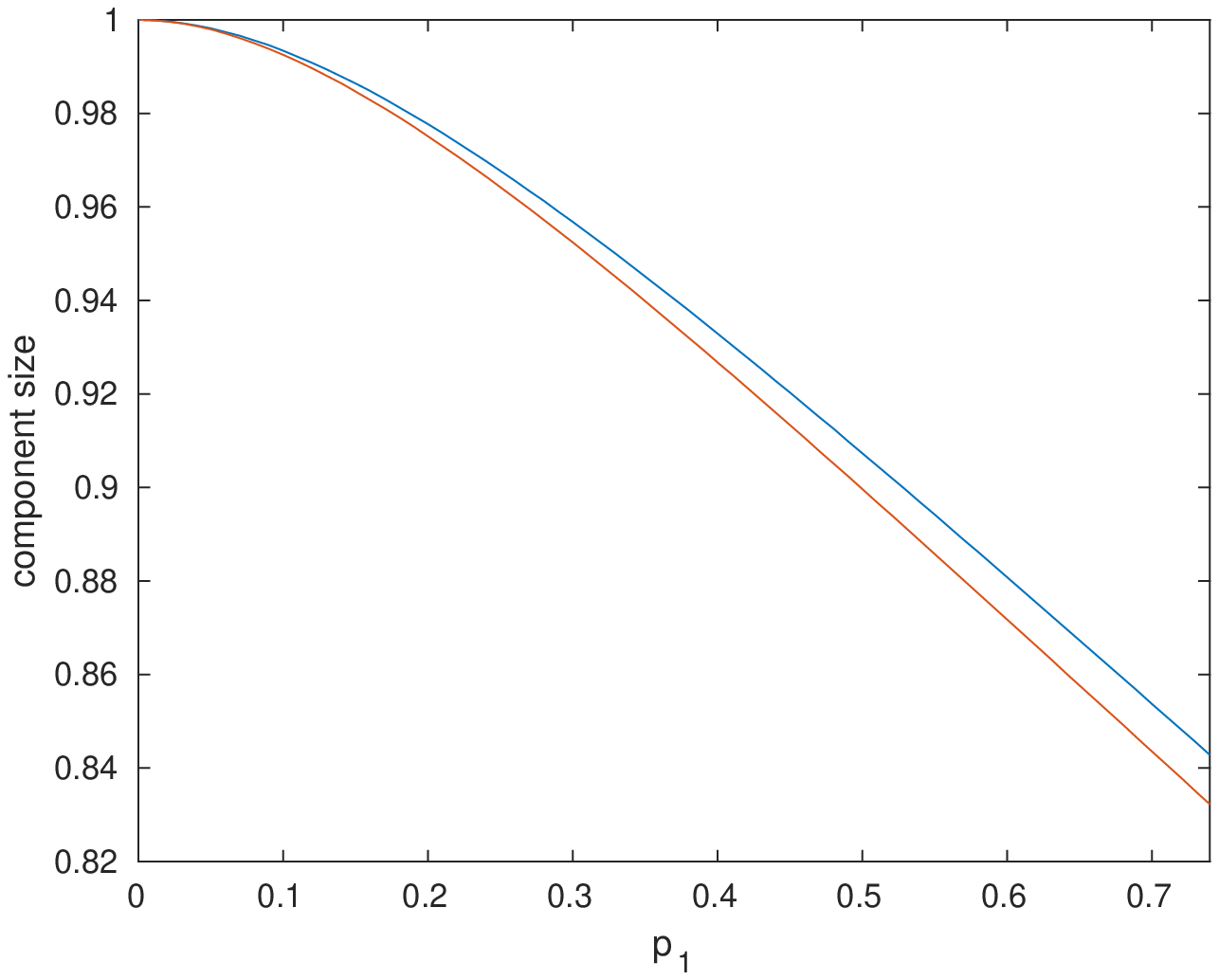}}}
\caption{Asymptotic size $\xi$ of the giant plotted against $p_1$ with (a): $p_d$ fixed for $d\geq 4$ and mean $\mu=2.2$ and (b): $p_d$ fixed for $d\geq 11$ and mean $\mu=3.5$. The constants in the power-law distributions are included to make the probabilities allocated in the tail roughly the same as for the Poisson distributions (approximately 0.1 in both cases).}\label{fig:tail}
\end{figure}

\textbf{Bounds for a given distribution over small degrees}\nopagebreak

We also argue that, conversely, fixing the distribution over small degrees typically leaves little room for controlling the component size by tuning the tail. Specifically, the difference between the maximal and the minimal achievable component size when the first $L$ probabilities and the mean are fixed tend to be small already for small values of $L$. This requires bounds for the component size for a given distribution over small degrees. To formulate our results here, let $\pL=\{p_1,\ldots,p_{\sss L}\}$ denote a fixed set of probabilities associated with degrees $1,\ldots,L$ for some $L\geq 1$, and write $\sFp$ for the set of all distributions having those specific initial probabilities. Also write $\sF$ for the set of all distributions in $\sFp$ with a given mean $\mu$. It turns out that a crude lower bound for the component size for distributions in $\sF$ is obtained by placing all remaining mass $p_{\sss >L}=1-\sum_{i=1}^Lp_i$ at the point $L+1$. Fixing also the mean $\mu$, under a mild technical condition, this bound can be modified into one that is optimal for distributions in $\sF$, that is, any larger bound is violated by some distribution in $\sF$. Under a similar technical condition, an optimal upper bound for distributions in $\sF$ is obtained by placing all remaining mass at two specific consecutive integers. 

For a fixed $\pL$, consider a distribution $G=G(\pL)$ with $p_{\sss L+1}=p_{\sss >L}$ (and $p_i=0$ for $i\geq L+2$), write $g_{\sss G}(s)$ for its probability generating function and $\xi_{\sss G}$ for the size of the giant component in a configuration graph with this degree distribution.

\begin{prop}\label{prop:lower}
For each fixed $\pL$, we have that $\xi_{\sss F}\geq \xi_{\sss G}$ for all $F\in\sFp$.
\end{prop}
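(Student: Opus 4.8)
The plan is to compare, for an arbitrary $F\in\sFp$, both the generating function $g_{\sss F}$ and its down-shifted size biased companion $g_{\sss F}'/\mu_{\sss F}$ against the corresponding objects for $G$, and then feed these two comparisons into formula \eqref{eq:xi}. Since $F$ and $G$ agree on $p_1,\dots,p_{\sss L}$ while $G$ puts all of the common leftover mass $p_{\sss >L}$ at the single point $L+1$, the first comparison is immediate: for $s\in[0,1]$,
\[
g_{\sss G}(s)-g_{\sss F}(s)=\sum_{i\geq L+1}p_i^{\sss F}\bigl(s^{\,L+1}-s^{\,i}\bigr)\geq 0,
\]
because $s^{\,i}\leq s^{\,L+1}$ when $i\geq L+1$ and $s\in[0,1]$. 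Hence $g_{\sss F}\leq g_{\sss G}$ on $[0,1]$. This alone does not suffice, because \eqref{eq:xi} evaluates $g$ at the \emph{distribution-dependent} point $\tilde z$, so I also need $\tilde z_{\sss F}\leq\tilde z_{\sss G}$.

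For that I would first record that $\mu_{\sss F}\geq\mu_{\sss G}$: the two means have equal contributions from degrees $\leq L$, and $F$'s leftover mass $p_{\sss >L}$, being spread over degrees $\geq L+1$, contributes at least $(L+1)p_{\sss >L}$ to $\mu_{\sss F}$, which is exactly $G$'s tail contribution. I then want to deduce that $\tilde F$ stochastically dominates $\tilde G$. By \eqref{eq:sb_down}, $\sum_{d=0}^{k}\tilde p_d^{\sss F}=\mu_{\sss F}^{-1}\sum_{j=1}^{k+1}jp_j^{\sss F}$; for $k+1\leq L$ the inner sum is $\sum_{j=1}^{k+1}jp_j$, the same for $F$ and $G$, so dividing by the larger number $\mu_{\sss F}$ gives $\sum_{d=0}^{k}\tilde p_d^{\sss F}\leq\sum_{d=0}^{k}\tilde p_d^{\sss G}$, while for $k+1>L$ the $G$-side already equals $1$ because $\tilde G$ is supported on $\{0,\dots,L\}$. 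So the distribution function of $\tilde F$ lies below that of $\tilde G$ everywhere, i.e.\ $\tilde F$ dominates $\tilde G$; and since $s\mapsto s^{d}$ is non-increasing on $[0,1]$, this upgrades to $g_{\sss F}'(s)/\mu_{\sss F}=\E[s^{\tilde F}]\leq\E[s^{\tilde G}]=g_{\sss G}'(s)/\mu_{\sss G}$ for all $s\in[0,1]$. Now the standard monotonicity of branching-process extinction probabilities applies: both maps are non-decreasing on $[0,1]$ and one dominates the other, so applying each map repeatedly to $0$ and inducting on the number of iterations gives, in the limit, $\tilde z_{\sss F}\leq\tilde z_{\sss G}$.

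Combining the pieces, $g_{\sss F}(\tilde z_{\sss F})\leq g_{\sss G}(\tilde z_{\sss F})\leq g_{\sss G}(\tilde z_{\sss G})$ — the first inequality from $g_{\sss F}\leq g_{\sss G}$, the second from monotonicity of $g_{\sss G}$ together with $\tilde z_{\sss F}\leq\tilde z_{\sss G}$ — so $\xi_{\sss F}=1-g_{\sss F}(\tilde z_{\sss F})\geq 1-g_{\sss G}(\tilde z_{\sss G})=\xi_{\sss G}$. I expect the only real obstacle to be the second comparison: one is tempted simply to substitute $\tilde z_{\sss G}$ into $g_{\sss F}\leq g_{\sss G}$, but $\tilde z$ moves with the distribution and the two down-shifted size biased laws carry different normalizations ($\mu_{\sss F}\neq\mu_{\sss G}$ in general), so one has to verify that $\mu_{\sss F}\geq\mu_{\sss G}$ really pushes the comparison the favourable way at every degree $\leq L$, which is precisely where $\tilde F$ and $\tilde G$ could otherwise disagree harmfully (above degree $L$ the domination is automatic since $\tilde G$ has no mass there). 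Throughout I would assume, as elsewhere in the paper, that the relevant means are finite and $p_2\neq1$, so that \eqref{eq:xi} applies to both $F$ and $G$.
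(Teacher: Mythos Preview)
Your proof is correct and follows essentially the same route as the paper: show $g_{\sss F}\leq g_{\sss G}$ on $[0,1]$, show that the down-shifted size biased law $\tilde F$ stochastically dominates $\tilde G$ (hence $\tilde z_{\sss F}\leq\tilde z_{\sss G}$), and combine. The only cosmetic difference is that the paper establishes the stochastic domination by the pointwise inequality $\tilde p_d\leq\tilde p_d^{\sss(G)}$ for $d\leq L$ (together with $\tilde p_d^{\sss(G)}=0$ for $d>L$), whereas you compare CDFs directly and spell out the iteration argument for the extinction probabilities; both arguments rely on the same underlying observation that $\mu_{\sss F}\geq\mu_{\sss G}$.
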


Proposition \ref{prop:lower} is proved in the next section. To formulate (optimal) bounds for distributions in $\sF$, where also the mean $\mu$ is fixed, denote
$$
\kappa=\frac{1}{p_{>L}}\left(\mu-\sum_{d=1}^Ldp_d\right),
$$
and note that, for any $F\in\sF$, we have for $D\sim F$ that $\E[D|D>L]=\kappa$. Next, let $H=H(\mu,\pL)$ be a distribution where all remaining mass is placed at the two integers $\lfloor \kappa\rfloor$ and $\lceil\kappa\rceil$ (or one integer if $\kappa$ is an integer) in such a way that the mean is preserved, that is,
$$
p_d^{(\sss H)}=\left\{ \begin{array}{ll}
                        p_d & \mbox{for }d=0,\ldots,L;\\
                        (\lfloor \kappa\rfloor+1 -\kappa)p_{>L} & \mbox{for }d=\lfloor \kappa\rfloor;\\
                        (\kappa-\lfloor \kappa\rfloor)p_{>L} & \mbox{for }d=\lfloor \kappa\rfloor+1.
            \end{array}
            \right.
$$
Write $g_{\sss H}$ for the associated generating function and $\xi_{\sss H}$ for the component size in the corresponding configuration graph. Finally, let $\tilde{z}_{\sss G}$ and $\tilde{z}_{\sss H}$ denote the extinction probabilities in branching processes with offspring distributions given by down-shifted size biased versions of the above distributions. Our bounds on the component size with fixed initial probabilities $\pL$ and fixed mean $\mu$ are as follows.

\begin{theorem}\label{th:main} Fix $\pL$ and $\mu$.
\item[{\rm{(a)}}] If $\pL$ is such that $\tilde{z}_{\sss G}\leq e^{-\frac{1}{L+1}}$, then
    $$
    \xi_F\geq 1-g_{\sss G}\left(\tilde{z}_{\sss G}^{(\mu)}\right)\quad \mbox{for all }F\in\sF,
    $$
    where $\tilde{z}_{\sss G}^{(\mu)}$ is the smallest non-negative solution to the equation $s=g_{\sss G}'(s)/\mu$.
\item[{\rm{(b)}}] If $\pL$ and $\mu$ are such that $\tilde{z}_{\sss H}\leq e^{-\frac{2}{L+1}}$, then
    $$
    \xi_F\leq \xi_{\sss H}\quad\mbox{for all }F\in\sF.
    $$
The bounds are optimal under the given conditions, that is, in (a) we have that\newline 
$\inf_{F\in\sF}\xi_F=1-g_{\sss G}(\tilde{z}_{\sss G}^{(\mu)})$ and in (b) that $\sup_{F\in\sF}\xi_F=\xi_{\sss H}$. 
\end{theorem}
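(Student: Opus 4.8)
The plan is to route everything through the two functionals $s\mapsto g_F(s)$ and $s\mapsto\varphi_F(s):=g_F'(s)/\mu$, and to reduce the comparison of $\xi_F=1-g_F(\tilde z_F)$ with the reference value to two pointwise inequalities. The elementary key fact is: if $\varphi_1,\varphi_2$ are generating functions of down-shifted size-biased type and $\varphi_1\le\varphi_2$ on $[0,z_2]$, where $z_2$ is the smallest fixed point of $\varphi_2$ in $[0,1]$, then the smallest fixed point $z_1$ of $\varphi_1$ satisfies $z_1\le z_2$; this is proved by running the iteration $s_{k+1}=\varphi_1(s_k)$, $s_0=0$ (whose limit is $z_1$), and checking inductively that the $s_k$ never leave $[0,z_2]$. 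Since each $g$ is increasing, $g_1\le g_2$ on $[0,1]$ together with $z_1\le z_2$ gives $g_1(z_1)\le g_2(z_2)$, i.e.\ a comparison of the corresponding component sizes. So in each part it suffices to show that the reference distribution minimizes both $g$ and $\varphi$ (on the relevant interval) among all admissible distributions, and that this interval contains the smallest fixed points in play; the hypotheses $\tilde z_{\sss G}\le e^{-1/(L+1)}$ and $\tilde z_{\sss H}\le e^{-2/(L+1)}$ are exactly what guarantees the latter.

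For part (a): for $F\in\sF$ and every $s\in[0,1]$ we have $\sum_{d>L}p^{(F)}_d s^d\le s^{L+1}\sum_{d>L}p^{(F)}_d=p_{>L}s^{L+1}$, hence $g_F\le g_{\sss G}$ on $[0,1]$ — essentially Proposition~\ref{prop:lower} again. For the offspring generating functions, $\mu(\varphi_{\sss G}(s)-\varphi_F(s))=(L+1)p_{>L}s^L-\sum_{d>L}dp^{(F)}_ds^{d-1}$, and the map $d\mapsto ds^{d-1}$ is non-increasing for $d\ge L+1$ whenever $s\le e^{-1/(L+1)}$, so $ds^{d-1}\le(L+1)s^L$ for all $d\ge L+1$ in that range and $\varphi_F\le\varphi_{\sss G}$ on $[0,e^{-1/(L+1)}]$. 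Raising the mean from $g_{\sss G}'(1)$ to $\mu$ only decreases $\varphi_{\sss G}$ pointwise, so $\tilde z_{\sss G}^{(\mu)}\le\tilde z_{\sss G}\le e^{-1/(L+1)}$; thus $\varphi_F\le\varphi_{\sss G}$ on $[0,\tilde z_{\sss G}^{(\mu)}]$, the key fact yields $\tilde z_F\le\tilde z_{\sss G}^{(\mu)}$, and $g_F(\tilde z_F)\le g_{\sss G}(\tilde z_F)\le g_{\sss G}(\tilde z_{\sss G}^{(\mu)})$ is the asserted bound.

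For part (b): the conditional tail of any $F\in\sF$ is a distribution on $\{L+1,L+2,\dots\}$ with total mass $p_{>L}$ and mean $\kappa$, while $H$'s tail is the two-point law on $\lfloor\kappa\rfloor,\lceil\kappa\rceil$ with the same mass and mean. I would use the standard fact that, among integer-valued variables with prescribed mean $\kappa$, the two-point law on $\lfloor\kappa\rfloor,\lceil\kappa\rceil$ minimizes $\E[f(D)]$ for every convex $f$: replace $f$ by its piecewise-linear interpolation through the integers — which equals $f$ on $\Z$, is again convex, and is linear on $[\lfloor\kappa\rfloor,\lceil\kappa\rceil]$ — and apply Jensen. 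With $f(d)=s^d$ (convex in $d$ for all $s\in[0,1]$) this gives $g_F\ge g_{\sss H}$ on $[0,1]$; with $f(d)=ds^{d-1}$ (convex in $d$ on $[L+1,\infty)$ once $s\le e^{-2/(L+1)}$) it gives $\varphi_F\ge\varphi_{\sss H}$ on $[0,e^{-2/(L+1)}]\supseteq[0,\tilde z_{\sss H}]$. Applying the key fact with the roles reversed gives $\tilde z_F\ge\tilde z_{\sss H}$, whence $g_F(\tilde z_F)\ge g_{\sss H}(\tilde z_F)\ge g_{\sss H}(\tilde z_{\sss H})$, i.e.\ $\xi_F\le\xi_{\sss H}$.

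It remains to check optimality. In (b) this is immediate: $H$ itself belongs to $\sF$ (it has the prescribed initial probabilities and, by construction, mean $\mu$), so the supremum is attained. In (a), $G$ has mean $g_{\sss G}'(1)<\mu$ and hence does not lie in $\sF$ in general (when $\kappa=L+1$ we have $G=H\in\sF$ and there is nothing to prove), so instead I would build an approximating sequence $F_M\in\sF$ from $G$ by moving a mass $q_M=\Theta(1/M)$ from $L+1$ to a far point $M$, chosen so the mean returns to $\mu$. Then $g_{F_M}\to g_{\sss G}$ uniformly on $[0,1]$ and $\varphi_{F_M}\to\varphi_{\sss G}$ uniformly on each $[0,1-\varepsilon]$, and one shows the smallest fixed point is continuous under this convergence: $\varphi_{\sss G}(s)-s$ is strictly positive on $[0,\tilde z_{\sss G}^{(\mu)})$ and changes sign at $\tilde z_{\sss G}^{(\mu)}$ (the process with generating function $\varphi_{\sss G}$ being supercritical, as $\tilde z_{\sss G}^{(\mu)}<1$; the degenerate case $\tilde z_{\sss G}^{(\mu)}=0$, i.e.\ $p_1=0$, is trivial since then $\xi_F\equiv1$), so uniform closeness on a compact interval around $\tilde z_{\sss G}^{(\mu)}$ forces $\tilde z_{F_M}\to\tilde z_{\sss G}^{(\mu)}$ and hence $\xi_{F_M}\to1-g_{\sss G}(\tilde z_{\sss G}^{(\mu)})$. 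I expect this last continuity step — ruling out spurious small fixed points of $\varphi_{F_M}$ and confirming that the one near $\tilde z_{\sss G}^{(\mu)}$ is the one selected — to be the only genuinely delicate point; the rest comes down to the monotonicity/convexity thresholds for $d\mapsto ds^{d-1}$ that are exactly what the two hypotheses encode.
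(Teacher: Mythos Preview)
Your proposal is correct and follows essentially the same route as the paper: for both parts you compare $g_F$ and $g_F'/\mu$ pointwise with the reference $g_{\sss G}$ (resp.\ $g_{\sss H}$), using exactly the monotonicity/convexity thresholds for $d\mapsto ds^{d-1}$ that the hypotheses encode, and then pass to the fixed points.

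The packaging differs in two minor but pleasant ways. For part (b), where the paper isolates a decomposition lemma (writing $N\stackrel{d}{=}ZN_1+(1-Z)N_2$ with integer conditional means and applying Jensen twice), you instead invoke the equivalent one-line fact that the two-point law on $\lfloor\kappa\rfloor,\lceil\kappa\rceil$ minimises $\E[f(D)]$ among integer-valued laws with mean $\kappa$ for convex $f$, via piecewise-linear interpolation; this is the same inequality by a slightly slicker route. For optimality in (a), the paper shows that the extinction probabilities $\tilde z_m$ of the approximants $G_m$ are eventually increasing and identifies their limit as $\tilde z_{\sss G}^{(\mu)}$; your uniform-convergence/continuity argument achieves the same end and is in fact not delicate at all once you notice that the bound just established already gives $\tilde z_{F_M}\le\tilde z_{\sss G}^{(\mu)}$, so only the lower half of the sandwich needs the sign argument for $\varphi_{\sss G}(s)-s$. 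One small imprecision: your ``key fact'' as stated asks for $\varphi_1\le\varphi_2$ on $[0,z_2]$, but in (b) you only have it on $[0,\tilde z_{\sss H}]=[0,z_1]$; the iteration proof works just as well under that weaker hypothesis (the iterates of $\varphi_1$ never leave $[0,z_1]$), so the argument goes through.
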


\textbf{Remark 1.} The restrictions on $\pL$ and $\mu$ are imposed for technical reasons. They imply that, if the extinction probabilities $\tilde{z}_{\sss G}$ and $\tilde{z}_{\sss H}$ are close to 1, then $L$ has to be large, that is, a sufficiently large part of the distribution has to be fixed. We believe that this serves to avoid e.g.\ situations where $\sF$ contains both subcritical and supercritical distributions. For most distributions, the conditions are mild, in the sense that they are satisfied already for moderate values of $L$ (in relation to $\mu$); see Table \ref{tab:bounds} for examples. Note however that, for $L=1$, when only the probability of degree 1 is fixed, the condition in (a) is not satisfied: in this case the distribution $G$ has mass only at 1 and 2 implying that $\tilde{z}_{\sss G}=1$.

\textbf{Remark 2.} The distribution $G$ can be thought of as the limiting case of a distribution $G_m$ where most of the remaining mass $p_{\sss >L}$ is placed at $L+1$ and a vanishing amount on another integer $m\to\infty$; see the proof of Theorem \ref{th:main}(b). The mean in this distribution $G_m$ is kept fixed at $\mu$, and the bound in (b) differs from the component size $\xi_{\sss G}$ obtained for the distribution $G$ in that the correct mean $\mu$ is used instead of the mean of $G$ in the equation defining $\tilde{z}_{\sss G}^{\sss(\mu)}$ (explaining the notation). Note that the spread in the distribution of the remaining mass is maximized in the distribution $G_m$. In the distribution $H$, on the other hand, the mass is concentrated as much as possible (while still keeping the mean fixed). 


\textbf{Numerical implementations}

Table 1 contains numerical values of the bounds in Proposition \ref{prop:lower} and Theorem \ref{th:main} for a few different distributions $\pL$ over small degrees (that all fulfill the technical conditions). As explained above, we only analyze distributions with $p_0=0$. We note that, in all cases, the upper and lower bound on the size of the giant are very close, supporting the claim that, if the distribution over low degrees is fixed, then the size of the giant is not affected much by the tail of the distribution. However, we would like to argue that this is the case for \emph{all} choises of $\pL$ and $\mu$ (satisfying the technical conditions) and for this we need to investigate the bounds more systematically. 

\begin{table}
\centering
\begin{tabular}{|l | l | l | l | l | l | l|}
\hline
$\pL=(p_1,\ldots,p_L)$ & $\mu$ & $L$ & $p_{\sss >L}$ & \footnotesize{Lower bound}  & \footnotesize{Lower bound} & \footnotesize{Upper bound} \\ 
& & & & \footnotesize{Proposition \ref{prop:lower}} & \footnotesize{Theorem \ref{th:main}(a)} & \footnotesize{Theorem \ref{th:main}(b)}\\ \hline
$(0.31,0.31,0.21)$ & 3 & 3 &  0.17 & 0.9140 & 0.9504 & 0.9508 \\
$(0.43,0.32)$ & 3 & 2 & 0.25 & 0.5896 & 0.9019 & 0.9103 \\
$(0.7,0,0)$ & 2 & 3 & 0.3 & 0.7023 & 0.7247 & 0.7318 \\
$(0.7,0,0)$ & 3 & 3 & 0.3 & 0.7023 & 0.8319 & 0.8366 \\
$(0.5,0.25,0.125)$ & 2 & 3 & 0.125 & 0.7047 & 0.7553 & 0.7680 \\
$(0.5,0.25,0.125)$ & 3 & 3 & 0.125 & 0.7047 & 0.8836 & 0.8851 \\
\hline
\end{tabular}
\caption{Bounds for the size of the giant component from Proposition \ref{prop:lower} and Theorem \ref{th:main}. The first two examples are Poisson probabilities with mean 2 and 1.5, respectively, conditional on the degree being strictly positive. All distributions satisfy the technical conditions in Theorem \ref{th:main}(a) and (b).}
\label{tab:bounds}
\end{table}

\begin{table}
\centering
\begin{tabular}{|l | l | l | l | l | l | l|}
\hline
$L$ & Maxdiff & $\pL=(p_1,\ldots,p_L)$ & $\mu$ & \footnotesize{Lower bound} & \footnotesize{Upper bound} \\ 
& & & & \footnotesize{Theorem \ref{th:main}(a)} & \footnotesize{Theorem \ref{th:main}(b)}\\ \hline
2 & 0.055 & (0.6, 0.1) & 2.0 & 0.7059 & 0.7616 \\
3 & 0.041 & (0.55, 0.35, 0) & 1.8 & 0.5664 & 0.6078 \\
4 & 0.029 & (0.75, 0.1, 0.1 0) & 1.6 & 0.4203 & 0.4494 \\
5 & 0.024 & (0.75, 0.2, 0, 0, 0) & 1.6 & 0.4188 & 0.4423 \\
\hline
\end{tabular}
\caption{Maximal difference between the bounds in Theorem \ref{th:main}(a) and (b) for different values of $L$. We also give the probabilities $\pL$ and mean $\mu$ that give rise to the maximal difference and the corresponding values of the bounds.}
\label{tab:maxdiff}
\end{table}

\begin{figure}
\centering \mbox{\includegraphics[height=5.5cm]{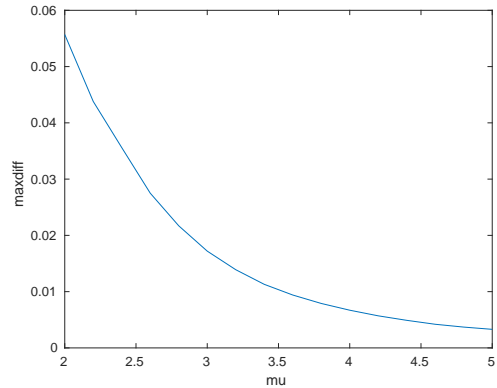}}
\caption{Maximal difference between the bounds in Theorem \ref{th:main}(a) and (b) plotted against $\mu$. The maximum is taken over $L\in\{2,3,4,5\}$ and distributions $\pL$.}\label{fig:mu_mxdiff}
\end{figure}

If a large part of the distribution is fixed, it is not surprising that the component size cannot be tuned much, and we hence focus on small values of $L$, say $L\leq 5$. For each $L\in\{2,3,4,5\}$ we have made a grid search (with step length 0.05) of all possible distributions $\pL$ for different values of $\mu\in[1,5]$ (with step length 0.2). Table 5 shows the maximal difference between the upper and lower bound for distributions fulfilling the technical conditions and also for which distribution $\pL$ and mean $\mu$ that this maximal difference is observed. We note that, for $L=2$, the maximal difference is 0.055 and it then decreases with $L$ to 0.024 for $L=5$. Througout, the worst cases occur for small values of $\mu$. This is confirmed by Figure \ref{fig:mu_mxdiff}, where the maximal difference (over $L\in\{2,3,4,5\}$ and $\pL$) is plotted against $\mu$. We remark that, in all cases, the maximal difference was observed for $L=2$. In summary, this indicates that, if the first $L=5$ probabilities are fixed (and the technical conditions satisfied), then the component size cannot vary more than approximately 0.024. 

It would of course be desirable to estimate the difference between the bounds analytically, but it seems complicated to obtain good estimates for small values of $L$, which is what we are after.

In the next section we prove Proposition \ref{prop:lower} and Theorem \ref{th:main}.

\section{Proof of Theorem \ref{th:main}}

Assume throughout this section that $\pL$ is fixed.

\begin{proof}[Proof of Proposition \ref{prop:lower}]
Fix a distribution $F\in\sFp$. Since the component size $\xi$ is given by \eqref{eq:xi}, and $\xi_{\sss G}$ by the analogous expression for the distribution $G$, we need to show that $g(\tilde{z})\leq g_{\sss G}(\tilde{z}_{\sss G})$. It is clear that $g(s)\leq g_{\sss G}(s)$ for any $s\in[0,1]$, and hence, since generating functions are increasing, it follows that $g(\tilde{z})\leq g_{\sss G}(\tilde{z}_{\sss G})$ if we show that $\tilde{z}\leq \tilde{z}_{\sss G}$. Let $\{\tilde{p}_d^{\sss (G)}\}_{d=0}^L$ denote the probabilities defining the down-shifted size biased version $\tilde{G}$ of $G$ and recall that $\{\tilde{p}_d\}$, defined in \eqref{eq:sb_down}, denote the corresponding probabilities for $F$. It is not hard to see that $\tilde{p}_d\leq \tilde{p}_d^{\sss (G)}$ for all $i=0,\ldots,L$ (and $\tilde{p}_i^{\sss (G)}=0$ for $i\geq L+1$). Hence $\tilde{G}$ is stochastically smaller than $\tilde{F}$, implying that $\tilde{z}\leq \tilde{z}_{\sss G}$, as desired.
\end{proof}

For the remainder of the section, we fix also the mean $\mu$.

\begin{proof}[Proof of Theorem \ref{th:main}(a)]
We begin by defining a sequence of distributions $\{G_m\}_{m\geq \kappa}$ where a vanishing (as $m\to\infty$) fraction of the remaining mass is placed at $m$ and the rest at $L+1$, in such a way that the mean of the distribution is fixed at $\mu$. Let 
$$
r_m=\frac{\kappa-(L+1)}{m-(L+1)}.
$$
Then $G_m=\{p_d^{\sss (m)}\}_{d\geq 1}$ is defined by
$$
p_d^{\sss (m)}=\left\{ \begin{array}{ll}
                        p_d & \mbox{for }d=0,\ldots,L;\\
                        (1-r_m)p_{\sss >L} & \mbox{for }d=L+1;\\
                        r_mp_{\sss >L} & \mbox{for }d=m.
            \end{array}
            \right.
$$
Note that $G_m\in\sF$. Write $\tilde{z}_m$ for the extinction probability of a branching process with offspring distribution given by a down-shifted size biased version $\tilde{G}_m$ of $G_m$. Also, let $\tilde{z}_{\sss G}^{\sss (\mu)}$ denote the smallest solution  of the equation $s=g'_{\sss G}(s)/\mu$. We will show that (i) $1-\xi_F = g_F(\tilde{z}_F)\leq g_{\sss G}(\tilde{z}_{\sss G}^{\sss (\mu)})$ for all $F\in\sF$ and then, in order to show that the bound is sharp, that (ii) $\tilde{z}_m$ is increasing for large $m$ and converges to $\tilde{z}_{\sss G}^{\sss (\mu)}$.

To establish (i), first fix a distribution $F\in\sF$, that is, in addition to $\pL$ we also fix $p_d$ for $d\geq L+1$ such that the mean is $\mu$. Since $g_{\sss F}(s)\leq g_{\sss G}(s)$ for all $s$ and generating functions are increasing, the desired conclusion follows if $\tilde{z}_{\sss F}\leq \tilde{z}_{\sss G}^{\sss (\mu)}$, which in turn follows if $g_{\sss F}'(\tilde{z}_{\sss F})\leq g'_{\sss G}(\tilde{z}_{\sss F})$, since the smallest solution $\tilde{z}_{\sss G}^{\sss (\mu)}$ of $s=g'_{\sss G}(s)/\mu$ must then be larger than $\tilde{z}_{\sss F}=g'_{\sss F}(\tilde{z}_{\sss F})/\mu$. The assumption $\tilde{z}_{\sss G}\leq e^{-\frac{1}{L+1}}$ ensures that functions of the form $f(d)=ds^{d-1}$, with $s\leq \tilde{z}_{\sss G}$, are strictly decreasing for $d\geq L+1$. Since $\tilde{z}_{\sss F}\leq \tilde{z}_{\sss G}$ (as shown in Proposition \ref{prop:lower}), this means that
$$
\sum_{d=L+1}^\infty d\tilde{z}_{\sss F}^{d-1}p_d\leq (L+1)\tilde{z}_{\sss F}
^L\sum_{d=L+1}^\infty p_d= (L+1)\tilde{z}_{\sss F}^{\sss L}p_{\sss >L},
$$
which implies that $g_{\sss F}'(\tilde{z}_{\sss F})\leq g'_{\sss G}(\tilde{z}_{\sss F})$, as desired.

As for (ii), note that it follows from the proof of Proposition \ref{prop:lower} that $\tilde{z}_m\leq \tilde{z}_{\sss G}$, and the assumption $\tilde{z}_{\sss G}\leq e^{-\frac{2}{L+1}}$ ensures that $\tilde{z}_{\sss G}<1$ so that $\tilde{z}_m<1$. The extinction probability $\tilde{z}_m$ solves the equation $s=g'_m(s)/\mu$ and hence it follows that $\tilde{z}_m$ is increasing for large $m$ if $g'_m(\tilde{z}_m)\leq g'_{m+1}(\tilde{z}_m)$ when $m$ is large -- indeed, the smallest solution $\tilde{z}_{m+1}$ of $s=g'_{m+1}(s)/\mu$ must then be larger than $\tilde{z}_m$. Noting that $g'_m(s)=\sum dp_d^{(m)}s^{d-1}$, we obtain that
\begin{equation*}
\begin{array}{cl}
\ & g'_{m+1}(\tilde{z}_m)-g'_m(\tilde{z}_m)\\
= & p_{\sss >L}
[(L+1)(r_m-r_{m+1}) (\tilde{z}_m)^{L} + (m+1)r_{m+1}(\tilde{z}_m)^{m}-m r_m (\tilde{z}_m)^{m-1}]\\
>  & p_{\sss >L} (\tilde{z}_m)^{L}
[(L+1)(r_m-r_{m+1})  - m r_m (\tilde{z}_m)^{m-L-1}],
\end{array}
\end{equation*}
which is positive for large $m$ since $r_m-r_{m+1}$ is positive and of order $m^{-2}$ while $m r_m (\tilde{z}_m)^{m-L-1}$ is exponentially decreasing in $m$ (recall, $\tilde{z}_m\leq  \tilde{z}_{\sss G}<1$ for all $m \geq \kappa$). Since $\tilde{z}_m$ is increasing for large $m$ and bounded from above by $\tilde{z}_{\sss G}<1$, it converges to some limit $\tilde{z}_\infty$ that is strictly smaller than 1. Furthermore, since $r_m \to 0$ and $\tilde{z}_m\leq  \tilde{z}_{\sss G}<1$, we obtain that
\begin{equation*}
\begin{array}{rcl}
\tilde{z}_{m} = g'_m(\tilde{z}_{m})/\mu & = &  \sum_{k=1}^L k p_k (\tilde{z}_{m})^{k-1} + p_{\sss >L}(L+1)(1-r_m) (\tilde{z}_{m})^{L} +p_{\sss >L}m r_m(\tilde{z}_{m})^{m-1}\\
\ & \to &  \sum_{k=1}^L k p_k (\tilde{z}_\infty)^{k-1} + p_{\sss >L}(L+1)\tilde{z}_\infty^L + 0 = g'_{\sss G}(\tilde{z}_\infty)/\mu
\end{array}
\end{equation*}
as $m \to \infty$. Therefore, $\tilde{z}_\infty$ is the unique solution of the equation $s=g'_{\sss G}(s)/\mu$ in $(0,1)$, which is also the definition of $\tilde{z}_{\sss G}^{\sss (\mu)}$.

Finally, we obtain that the derived bound, $1-\xi_F = g_F(\tilde{z}_F)\leq g_{\sss G}(\tilde{z}_{\sss G}^{\sss (\mu)})$, is optimal: Since $g_m(\tilde{z}_m)\nearrow g_{\sss G}(\tilde{z}_{\sss G}^{\sss (\mu)})$, for any $\xi>1-g_{\sss G}(\tilde{z}_{\sss G}^{\sss (\mu)})$ there exist an $m$ such that $\xi_{\sss G_m}<\xi$.
\end{proof} 

The following simple lemma will be used in the proof of Theorem \ref{th:main}(b). It will be applied to $N\stackrel{d}{=}D|D>L$ -- that is, a random variable distributed as $D$ conditional on being strictly larger than $L$ -- and the mean is therefore denoted by $\kappa$.

\begin{lemma}\label{le}
Let $N$ be an integer valued random variable with mean $\kappa$. There exist integer valued random variables $N_1$ and $N_2$ with $\E[N_1]=\lfloor\kappa\rfloor$ and $\E[N_2]=\lfloor\kappa\rfloor+1$ such that, with $Z\sim\mbox{Be}(\lfloor\kappa\rfloor+1-\kappa)$ independent of $N_1$ and $N_2$, we have that
$$
N\stackrel{d}{=}ZN_1+(1-Z)N_2.
$$
\end{lemma}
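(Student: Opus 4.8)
\textbf{Proof plan for Lemma \ref{le}.}

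The plan is to reformulate the claim as a statement about splitting the law of $N$ into a two-component mixture. Write $q_k=\PP(N=k)$, $a=\lfloor\kappa\rfloor$ and $p=a+1-\kappa$. If $\kappa\in\Z$ then $p=1$ and we may simply take $N_1\stackrel{d}{=}N$ (and $N_2$ arbitrary), so assume $\kappa\notin\Z$; then $p\in(0,1)$ and $pa+(1-p)(a+1)=\kappa$. It then suffices to produce a ``fractional sub-distribution'' $\theta\colon\Z_{\ge 0}\to[0,1]$ with $\sum_k q_k\theta_k=p$ and $\sum_k k\,q_k\theta_k=pa$: setting $\pi_1(k)=q_k\theta_k/p$ and $\pi_2(k)=q_k(1-\theta_k)/(1-p)$ gives two probability distributions on $\Z_{\ge 0}$ with law$(N)=p\,\pi_1+(1-p)\,\pi_2$, where $\pi_1$ has mean $a$ and $\pi_2$ has mean $(\kappa-pa)/(1-p)=a+1$ (by the mean identity above). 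Taking $N_1\sim\pi_1$, $N_2\sim\pi_2$ and $Z\sim\mbox{Be}(p)$ independent then yields $N\stackrel{d}{=}ZN_1+(1-Z)N_2$, since the law of $ZN_1+(1-Z)N_2$ is exactly $p\,\pi_1+(1-p)\,\pi_2$.

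To find such a $\theta$, I would work on the convex set $\mathcal{C}_p=\{\theta\in[0,1]^{\Z_{\ge 0}}\colon\sum_k q_k\theta_k=p\}$, which is nonempty because $p<1$, and note that $\Phi(\theta):=\sum_k k\,q_k\theta_k$ is an affine functional on $\mathcal{C}_p$ with finite values (all at most $\sum_k kq_k=\kappa$), so its range $\Phi(\mathcal{C}_p)$ is an interval. It therefore suffices to exhibit $\theta^-,\theta^+\in\mathcal{C}_p$ with $\Phi(\theta^-)\le pa\le\Phi(\theta^+)$, for then $pa\in\Phi(\mathcal{C}_p)$ and any $\theta\in\mathcal{C}_p$ with $\Phi(\theta)=pa$ (for instance a suitable convex combination of $\theta^-$ and $\theta^+$) does the job. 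I would take $\theta^-$ to fill the lowest values of $N$ up to total mass $p$ --- that is, $\theta^-_k=1$ for $k$ below a threshold $\tau$, a fractional value at $k=\tau$, and $0$ above --- and $\theta^+$ to fill the highest values up to mass $p$ in the analogous way.

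The remaining point is to check $\Phi(\theta^-)\le pa$ and $\Phi(\theta^+)\ge pa$. For the first: let $\tau$ be the smallest integer with $\PP(N\le\tau)\ge p$ and $\gamma=\PP(N\le\tau-1)<p$, so $\Phi(\theta^-)=\E[N\mathbf{1}\{N<\tau\}]+(p-\gamma)\tau$. If $\tau\le a$ then $\theta^-$ is supported on $\{N\le a\}$ and $\Phi(\theta^-)\le a\sum_k q_k\theta^-_k=pa$. If $\tau\ge a+1$, then $\E[N\mathbf{1}\{N\ge\tau\}]\ge\tau(1-\gamma)$ gives $\Phi(\theta^-)\le\kappa-\tau(1-\gamma)+(p-\gamma)\tau=\kappa-\tau(1-p)=\kappa-\tau(\kappa-a)\le\kappa-(a+1)(\kappa-a)=a(a+1-\kappa)=pa$, where we used $1-p=\kappa-a$, $\tau\ge a+1$, $\kappa>a$. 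The inequality $\Phi(\theta^+)\ge pa$ is entirely symmetric, filling the top of the distribution and using the analogous threshold $\sigma$ together with $\sigma\le a+1$. This establishes $pa\in\Phi(\mathcal{C}_p)$ and completes the proof.

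I expect the only genuine work to be the pair of inequalities $\Phi(\theta^-)\le pa\le\Phi(\theta^+)$, i.e.\ verifying that the extreme low and high mass-$p$ chunks of $N$ straddle the target value $pa$; this is a short quantile estimate, essentially Markov-type, combined with the bookkeeping identity $p=a+1-\kappa$. Everything else --- the reduction to the mixture formulation, convexity of $\mathcal{C}_p$, affineness of $\Phi$, and the reconstruction of $N_1$, $N_2$, $Z$ --- is routine.
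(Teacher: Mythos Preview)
Your argument is correct, but it takes a genuinely different route from the paper. The paper gives an explicit construction: it splits $N$ at the single threshold $\lfloor\kappa\rfloor$ into $N_{\mathrm{low}}\stackrel{d}{=}N\mid N\le\lfloor\kappa\rfloor$ and $N_{\mathrm{hi}}\stackrel{d}{=}N\mid N>\lfloor\kappa\rfloor$, and then sets $N_1=XN_{\mathrm{low}}+(1-X)N_{\mathrm{hi}}$ and $N_2=YN_{\mathrm{low}}+(1-Y)N_{\mathrm{hi}}$ for Bernoulli $X,Y$ with parameters $(\kappa_{\mathrm{hi}}-\lfloor\kappa\rfloor)/(\kappa_{\mathrm{hi}}-\kappa_{\mathrm{low}})$ and $(\kappa_{\mathrm{hi}}-\lfloor\kappa\rfloor-1)/(\kappa_{\mathrm{hi}}-\kappa_{\mathrm{low}})$ chosen precisely so that $\E[N_1]=\lfloor\kappa\rfloor$ and $\E[N_2]=\lfloor\kappa\rfloor+1$; one then checks by a one-line computation that the overall mixture with $Z$ recovers the law of $N$. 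By contrast, you argue existence abstractly: you parametrise mass-$p$ sub-distributions by $\theta\in[0,1]^{\Z_{\ge0}}$, observe that the first-moment map $\Phi$ is affine on the convex set $\mathcal{C}_p$, and then exhibit the extreme ``bottom-$p$'' and ``top-$p$'' chunks $\theta^-,\theta^+$ and verify via a quantile estimate that $\Phi(\theta^-)\le pa\le\Phi(\theta^+)$, so the intermediate value is attained. The paper's construction is shorter, fully explicit, and makes it immediately transparent that $N_1,N_2$ live on the support of $N$ (which is what is actually used in Theorem~\ref{th:main}(b), where one needs $N_1,N_2>L$); your approach also yields this support property, since $\pi_i(k)=0$ whenever $q_k=0$, but it requires the extra quantile computation. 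On the other hand, your argument is more robust --- it shows that \emph{any} target mean in the interval $[\Phi(\theta^-)/p,\,\Phi(\theta^+)/p]$ is achievable by some $\pi_1$, not just the integer $\lfloor\kappa\rfloor$ --- and it does not rely on the happy algebraic fact that the two mixture weights for $N_{\mathrm{low}},N_{\mathrm{hi}}$ land in $[0,1]$.
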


\begin{proof}
Let $N_{\rm{low}}\stackrel{d}{=}N|N\leq \lfloor\kappa\rfloor$ and $N_{\rm{hi}}\stackrel{d}{=}N|N>\lfloor\kappa\rfloor$ be independent and write $\kappa_{\rm{low}}$ and $\kappa_{\rm{hi}}$ for the respective means. Furthermore, let $X$ and $Y$ be Bernoulli variables independent of $N_{\rm{low}}$ and $N_{\rm{hi}}$ with parameter $\frac{\kappa_{\rm{hi}}-\lfloor\kappa\rfloor}{\kappa_{\rm{hi}}-\kappa_{\rm{low}}}$ and $\frac{\kappa_{\rm{hi}}-\lfloor\kappa\rfloor-1}{\kappa_{\rm{hi}}-\kappa_{\rm{low}}}$, respectively. Then set
$$
N_1=XN_{\rm{low}}+(1-X)N_{\rm{hi}}\qquad N_2=YN_{\rm{low}}+(1-Y)N_{\rm{hi}}.
$$
It is straightforward to confirm that $\PP(ZN_1+(1-Z)N_2=i)=\PP(N=i)$ for all $i$.
\end{proof}

\begin{proof}[Proof of Theorem \ref{th:main}(b).]
We need to show that $g_{\sss F}(\tilde{z}_{\sss F})\geq g_{\sss H}(\tilde{z}_{\sss H})$ for all $F\in\sF$. To this end, we begin by showing that
\begin{equation}\label{eq:kappa_dom}
g_{\sss F}(s)\geq g_{\sss H}(s)\quad\mbox{for all }s\in[0,1]\mbox{ and  all }F\in\sF.
\end{equation}
Pick $F\in\sF$ and let $D\sim F$. The probability generating function $G_{\sss F}(s)$ can be written as
$$
g_{\sss F}(s)=\E\left[s^D\right]=\E\left[s^D|D\leq L\right](1-p_{\sss >L})+\E\left[s^D|D>L\right]p_{\sss >L}.
$$
Applying Lemma \ref{le} with $N\stackrel{d}{=}D|D>L$, we can write
$$
\E\left[s^D|D>L\right]=\E\left[s^{N_1}\right]\PP(Z=1)+\E\left[s^{N_2}\right]\PP(Z=0),
$$
where $\E[N_1]=\lfloor\kappa\rfloor$, $\E[N_2]=\lfloor\kappa\rfloor+1$ and $Z\sim\mbox{Be}(\lfloor\kappa\rfloor+1-\kappa)$. Jensen's inequality then yields that
$$
\E\left[s^D|D>L\right]\geq s^{\lfloor\kappa\rfloor}\PP(Z=1)+s^{\lfloor\kappa\rfloor+1} \PP(Z=0).
$$
The probability generating function $g_{\sss H}(s)$ can be written as
$$
g_{\sss H}(s)=\E\left[s^D|D\leq L\right](1-p_{\sss >L})+\left(s^{\lfloor\kappa\rfloor}\PP(Z=1)+s^{\lfloor\kappa\rfloor+1}\PP(Z=0)\right)p_{\sss >L},
$$
and hence \eqref{eq:kappa_dom} follows. Since generating functions are increasing, the desired bound now follows if $\tilde{z}_{\sss F}\geq \tilde{z}_{\sss H}$, which in turn follows if $g'_{\sss F}(\tilde{z}_{\sss H})\geq g'_{\sss H}(\tilde{z}_{\sss H})$. The assumption $\tilde{z}_{\sss H}\leq e^{-\frac{2}{L+1}}$ ensures that this is the case: Let $D_\kappa\sim H$. Note that, since the two distributions $F$ and $H$ agree up to $L$, the desired inequality follows if $\E[D\tilde{z}_{\sss H}^D|D>L]\geq \E[D_\kappa\tilde{z}_{\sss H}^{D_\kappa}|D_\kappa>L]$. We have that
$$
\E\left[D_\kappa\tilde{z}_{\sss H}^{D_\kappa}|D_\kappa>L\right]=\lfloor\kappa\rfloor
\tilde{z}_{\sss H}^{\lfloor\kappa\rfloor}(\lfloor\kappa\rfloor+1-\kappa) + (\lfloor\kappa\rfloor+1)\tilde{z}_{\sss H}^{\lfloor\kappa\rfloor+1}(\kappa-\lfloor\kappa\rfloor).
$$
By Lemma \ref{le}, with $N\stackrel{d}{=}D|D>L$, we can write
$$
\E\left[D\tilde{z}_{\sss H}^D|D>L\right] = \E\left[N_1\tilde{z}_{\sss H}^{N_1}\right](\lfloor\kappa\rfloor+1-\kappa)+ \E\left[N_2\tilde{z}_{\sss H}^{N_2}\right](\kappa-\lfloor\kappa\rfloor),
$$
where $N_1,N_2>L$, $\E[N_1]=\lfloor\kappa\rfloor$ and $\E[N_2]=\lfloor\kappa\rfloor+1$. The assumption $\tilde{z}_{\sss H}\leq e^{-\frac{2}{L+1}}$ implies that $f(d)=d\tilde{z}_{\sss H}^{d}$ is convex for $d\geq L+1$. It follows from a straightforward modification of Jensen's inequality (specifically, a restriction to $[L+1,\infty)$) that
$$
\E\left[N_1\tilde{z}_{\sss H}^{N_1}\right]\geq \lfloor\kappa\rfloor\tilde{z}_{\sss H}^{\lfloor\kappa\rfloor},\qquad
\E\left[N_2\tilde{z}_{\sss H}^{N_2}\right]\geq (\lfloor\kappa\rfloor+1)\tilde{z}_{\sss H}^{\lfloor\kappa\rfloor+1}
$$
and the bound follows. That the bound is optimal follows by noting that $F_\kappa\in\sF$, that is, the distribution defining the bound is included in the class.
\end{proof}

\end{document}